\newcounter{cnt1}
\newcounter{cnt2}
\newcommand{\blr}{\begin{list}{$(\roman{cnt1})$}
    {\usecounter{cnt1} \setlength{\topsep}{0pt}
        \setlength{\itemsep}{0pt}}}
\newcommand{\bla}{\begin{list}{$($\alph{cnt2}$)$}
    {\usecounter{cnt2} \setlength{\topsep}{0pt}
        \setlength{\itemsep}{0pt}}}
\newcommand{\el}{\end{list}}
\newtheorem{thm}{Theorem}
\newtheorem{lem}[thm]{Lemma}
\newtheorem{ex}[thm]{Example}
\newtheorem{Def}[thm]{Definition}
\newtheorem{prop}[thm]{Proposition}
\newtheorem{rem}[thm]{Remark}
\newcommand{\Rem}{\begin{rem} \rm}
\newcommand{\bdfn}{\begin{Def} \rm}
\newcommand{\edfn}{\end{Def}}
\begin{document}
\large
\title[ Spaces of operators]{Frech\'{e}t differentiability and quasi-polyhedrality in  spaces of operators}
\author[Rao]{T. S. S. R. K. Rao}
\address[T. S. S. R. K. Rao]
{Department of Mathematics\\
Shiv Nadar University \\
Delhi (NCR) \\ India,
\textit{E-mail~:}
\textit{srin@fulbrightmail.org}}
\subjclass[2000]{Primary 47 L 05, 46 B20, 46B25  }
 \keywords{
Frech\'{e}t differentiability of the norm, very smooth points, Quasi-polyhedral points, spaces of operators, Abstract $L$ and $M$-spaces.
 }

\begin{abstract}
Let $X,Y$ be infinite dimensional, Banach spaces. Let ${\mathcal L}(X,Y)$ be the space of bounded operators . Motivated by the fact that smoothness of norm in the higher duals of even order of a Banach space can lead to Frech\'{e}t differentiability, we exhibit classes of Banach spaces $X,Y$ where very smooth points (i.e., smooth points that remain smooth in the bidual) in the space of compact operators ${\mathcal K}(X,Y)$  are Frech\'{e}t smooth in ${\mathcal L}(X,Y)$ and hence in ${\mathcal K}(X,Y)$. These results yield new examples of Frech\`{e}t smooth operators. We also study quasi-ployhedral points in spaces of vector-valued continuous functions. Our arguments apply when $X$ is a abstract $L$-space and $Y$ is a function algebra on a compact set.
\end{abstract}
  \maketitle
\section { Introduction}
Let $X$ be a Banach space. We recall from \cite{DGZ} that a non-zero vector $x \in X$ is said to be a smooth point, if there is a unique functional of norm one, $x^\ast_0 \in X^\ast$ such that $x^\ast_0(x) = \|x\|$. It is easy to see that such a functional will be an extreme point of the dual unit ball, $x^\ast_0 \in \partial_e X^\ast_1$. By $X_1$ we denote the unit ball of $X$.
\vskip 1em
For a Banach non-reflexive space $X$ by $X^{\ast\ast},~X^{(IV)},...$ we denote the bidual, fourth dual etc.,. We always embed a Banach space in its bidual via the canonical embedding. We recall from \cite{S} that a smooth point $x \in X$ is said to be very smooth, if it is also a smooth point of $X^{\ast\ast}$. It is easy to show that $C([0,1])$ does not have any very smooth points (see Example 1. 1.6 (b) from \cite{DGZ}). Thus smoothness can vanish in higher even duals.
\vskip 1em
We recall from \cite{DGZ} (page 2) that a smooth point $x \in X$ is said to be Frech\'{e}t smooth, if the functional  $x^\ast_0 \in X^\ast$ satisfies  $lim_{y \rightarrow 0}\frac {\|x+y\|- \|x\| - x_0^\ast(y)}{\|y\|} =0$.
\vskip 1em
We will be using an equivalent formulation (see Corollary 1.1.5 from \cite{DGZ}), a smooth point $x$ is Frech\'{e}t smooth if and only if $x^\ast_0$ strongly exposes $x$ in the sense that if for a sequence $\{x_n^\ast\}_{n \geq 1} \subset X^\ast_1$ such that $x_n^\ast(x) \rightarrow \|x\|=x^\ast(x)$ implies $x^\ast_n \rightarrow x^\ast$. Hence or otherwise, it is easy to see that $x$ continues to be a point of Frech\'{e}t differentiability in $X^{\ast\ast}$ and thus in all higher order even duals of $X$. It is also known that for Banach spaces $Y \subset X$, if $y \in Y$ is point of Frech\'{e}t differentiability in $X$, then is also Frech\'{e}t differentiable in $Y$ (see Lemma 2.1 of \cite{MR}).
\vskip 1em
The main thrust of our investigation is based on exhibiting very smooth points of certain Banach spaces that turn out to be Frech\'{e}t differentiable in $X^{\ast\ast}$ and hence in $X$. See \cite{MR} where such ideas have been used in $JB^\ast$-triples. We then use this idea to exhibit very smooth points of ${\mathcal K}(X,Y)$ that are Frech\'{e}t smooth in ${\mathcal L}(X,Y)$. The limitations of `higher-order smoothness techniques' was illustrated in \cite{GR} by exhibiting smooth points of $X$ that are smooth in $X^{(IV)}$ but not beyond.
\vskip 1em
For a unit vector $x \in X$, we denote the state space $S_x =\{x^\ast \in X^\ast_1:x^\ast(x)=1\}$. This is a weak$^\ast$-compact face of $X^\ast_1$ and hence $\partial_e S_x \subset \partial_e X^\ast_1$.  Another geometric concept studied in \cite{GI} that is also related to differentiability, is that of a $QP$-point (quasi-polyhedral), based on the `nearness' of state spaces. We recall that $x$ is a $QP$ point, if there exists a $\delta >0$, such that for any unit vector $z$ with $\|z-x\|< \delta$, $S_z \subset S_x$. Note that it is enough to show that $\partial_e S_z\subset \partial_e S_x$. See \cite{R3} for an analysis of state spaces in spaces of operators.
\vskip 1em

We recall that if $X = M \bigoplus_1 N$, for closed subspaces $M,N$, then there are called $L$-summands (similarly, when the direct sum is a $\ell^\infty$-sum, they are called $M$-summands).
We first show that if $X$ is a Banach space such that for every $x^\ast \in \partial_e X^\ast_1$, $span\{x^\ast\}$ is a $L$-summand in $X^\ast$, then any very smooth point of $X$ is a Frech\'{e}t smooth point. Let $X$ be a abstract $L$-space. By Kakutani's theorem (see \cite{L} Chapter 1), $X$ is isometric to $L^1(\mu)$, for a positive measure $\mu$ . When $\mu$ has atoms, for any $f \in \partial_e L^1(\mu)_1$ is of the form $\pm \frac{\chi_{A}}{\mu(A)}$ for a $\mu$-atom $A$, clearly  the projection, $f \rightarrow f \chi_{A}$ shows that $span \{ \frac{\chi_{A}}{\mu(A)} \}$ is a $L$-summand.
Thus any Banach space $X$ with $X^\ast$ is isometric to $L^1(\mu)$ (called $L^1$-predual spaces) has this property . Any abstract $M$-space has this property.  See \cite{L} Chapter 7, Sections 18,19 for several examples which need not be lattices, like the space of affine continuous functions, $A(K)$ on a Choquet simplex $K$) . We recall that these spaces are precisely order unit spaces, with the Riesz decomposition property. See \cite{L} Chapter 2.
\vskip 1em
Our attempts are towards proving commutative versions of the results from \cite{MR} (see Lemma 2.2) . In particular the proof of Theorem 8  here illustrates a popular `facial structure' technique.
\vskip 1em
For the space of vector-valued continuous function on a compact set $\Omega$, we show that if every very smooth point of $X$ is a $QP$-point, then any very smooth point of $C(\Omega,X)$ is a $QP$-point.
\vskip 1em

 We show that if $X$ is a Banach space  such that every very smooth point of $X^\ast$ is Frech\'{e}t smooth and $Y$ is a $L^1$-predual space, then any very smooth point of ${\mathcal K}(X,Y)$ is a Frech\'{e}t smooth point of ${\mathcal L}(X,Y)$. See \cite{R1} for a description of very smooth points of ${\mathcal K}(X,Y)$. See \cite{TW}, \cite{S1},\cite{S2} and \cite{R3} for recent applications of these ideas.
 \section{Main Results }
 Our first result gives a procedure for generating Frech\'{e}t differentiable points. For simplicity, we assume that the spaces are over real scalar field (particularly while dealing with extreme points, we multiply by $\pm 1$, rather than by a scalar from the circle), however the arguments hold over either scalar field.
\begin{thm} Let $x_0 \in X$ be a unit vector  such that $X = span\{x_0\} \bigoplus_{\infty} N$ for some closed subspace $N \subset X$. If $d(x,span\{x_0\}) < \|x\|$, then the norm is Frech\'{e}t differentiable at $x$ .
\end{thm}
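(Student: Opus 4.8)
The plan is to exploit the $M$-summand structure directly, translating Fr\'{e}chet differentiability into the strong-exposure criterion recalled in the introduction. First I would write $x = \lambda x_0 + n$ with $n \in N$, using the decomposition $X = \mathrm{span}\{x_0\} \bigoplus_{\infty} N$, so that $\|x\| = \max(|\lambda|, \|n\|)$. Computing the distance, $d(x, \mathrm{span}\{x_0\}) = \inf_\mu \max(|\lambda - \mu|, \|n\|) = \|n\|$, the infimum being attained at $\mu = \lambda$. Hence the hypothesis $d(x, \mathrm{span}\{x_0\}) < \|x\|$ is equivalent to $\|n\| < |\lambda|$, which forces $\|x\| = |\lambda|$; after multiplying by $-1$ if necessary I may assume $\lambda = \|x\| > \|n\|$.

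Next I would pass to the dual, where the $M$-sum dualizes to the $L$-sum $X^\ast = \mathbb{R}\, x_0^\ast \bigoplus_1 N^\ast$: a functional is a pair $(a,\psi)$ acting by $(a,\psi)(\mu x_0 + m) = a\mu + \psi(m)$, with dual norm $\|(a,\psi)\| = |a| + \|\psi\|$, and $x_0^\ast = (1,0)$. For a norm-one $(a,\psi)$ to satisfy $(a,\psi)(x) = \lambda$ one estimates $a\lambda + \psi(n) \le |a|\lambda + \|\psi\|\,\|n\| \le (|a| + \|\psi\|)\lambda = \lambda$, with equality forcing $a \ge 0$ and, crucially, $\|\psi\|(\lambda - \|n\|) = 0$. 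Since $\lambda - \|n\| > 0$ this yields $\psi = 0$ and then $a = 1$; thus $x_0^\ast$ is the unique norming functional and $x$ is already a smooth point.

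It then remains to upgrade smoothness to strong exposure, which is where the strict gap does the real work. Let $\{x_n^\ast\} = \{(a_n, \psi_n)\} \subset X_1^\ast$ with $x_n^\ast(x) \to \|x\| = \lambda$. Keeping the slack in the previous estimate and using $|a_n| \le 1 - \|\psi_n\|$,
$$ a_n\lambda + \psi_n(n) \le (1 - \|\psi_n\|)\lambda + \|\psi_n\|\,\|n\| = \lambda - \|\psi_n\|(\lambda - \|n\|). $$
Since the left side tends to $\lambda$ and $\lambda - \|n\| > 0$, we conclude $\|\psi_n\| \to 0$; then $|\psi_n(n)| \le \|\psi_n\|\,\|n\| \to 0$ forces $a_n \to 1$. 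Consequently $\|x_n^\ast - x_0^\ast\| = |a_n - 1| + \|\psi_n\| \to 0$, so $x_0^\ast$ strongly exposes $x$, and by the criterion recalled above $x$ is a point of Fr\'{e}chet differentiability.

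I expect the only genuine step — as opposed to bookkeeping — to be the observation that the hypothesis is precisely a \emph{strict} inequality $\|n\| < |\lambda|$, producing a uniform gap $\lambda - \|n\|$. This gap is exactly what converts the one-sided norming estimate into the quantitative control $\|\psi_n\| \to 0$ required for strong exposure, rather than the mere uniqueness of the norming functional that a non-strict inequality would give. The dualization of the $M$-sum to an $L$-sum and the elementary distance computation are routine.
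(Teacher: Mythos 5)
Your proof is correct and follows essentially the same route as the paper: dualize $X = \mathrm{span}\{x_0\}\bigoplus_{\infty} N$ to the $\ell^1$-decomposition of $X^\ast$ and verify that the unique norming functional $x_0^\ast$ strongly exposes $x$. The only difference is internal: where the paper first treats $x_0$ itself, gets weak$^\ast$-convergence of the norming sequence from uniqueness plus weak$^\ast$-compactness and then upgrades to norm convergence, asserting the general case ``follows as before,'' you handle general $x$ directly and extract $\|\psi_n\|\to 0$ from the quantitative gap $\lambda - \|n\|>0$, which makes explicit the step the paper leaves implicit.
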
	
\begin{proof}
\vskip 1em
 To see this note, $X^\ast = span\{x_0\}^\bot \bigoplus_1 N^\bot$. Clearly $N^\bot = span\{x^\ast_0\}$, where $x^\ast_0 \in \partial_e X^\ast_1$ is the unique functional with $x^\ast_0(x_0)= 1$.
 \vskip 1em
 We next show that $x^\ast_0$ strongly exposes $x_0$. Suppose $\{x^\ast_n\}_{n \geq 1} \subset X^\ast_1$ such that $x^\ast_n(x_0) \rightarrow 1$. We shall show that $x^\ast_n \rightarrow x^\ast_0$ in the norm. Clearly all weak$^\ast$-accumulation points of the sequence $x^\ast_n$, take value $1$ at $x_0$. Hence by uniqueness and weak$^\ast$-compactness, $x^\ast_n \rightarrow x^\ast_0$. So that $\|x^\ast_n\| \rightarrow 1$.
 \vskip 1em
 Suppose $x^\ast_n = \alpha_n x^\ast_0 + y^\ast_n$, where $y^\ast_n \in span\{x_0\}^\bot$ and $\|x^\ast_n\|= |\alpha_n|+\|y^\ast_n\|$. Evaluating at $x_0$, $\alpha_n \rightarrow 1$. Thus $\|y_n^\ast\| \rightarrow 0$.  Now $$\|x_n^\ast-x^\ast_0\| \leq |\alpha_n -1|+\|y^\ast_n\|.$$
  Hence the norm is Frech\'{e}t differentiable at $x_0$ .
  \vskip 1em
 If $x^\ast \in \partial_e X^\ast_1$ is such that $x^\ast(x) = \|x\|$, we see that $x^\ast \in \partial_e(N^\bot)_1$, so that $x^\ast=x^\ast_0$. The conclusion follows as before.
\vskip 1em
Now $X^{\ast\ast}= span\{x_0\} \bigoplus_{\infty} N^{\ast\ast}$. Also $d(x, span\{x_0\})< \|x\|$, so $x$ is a Frech\'{e}t  smooth point in $X^{\ast\ast}$ as well as all the higher even duals of $X$ (this later fact is true in general, but it is immediate in this context).
\end{proof}
\vskip 1em

We recall that if $x_0\in X$ is a very smooth point, then the unique functional $x^\ast_0 \in \partial_e X^\ast_1$ has the property that in $X^{\ast\ast\ast}_1$, it is also the unique norm preserving extension of $x_0^\ast$ on $X$  to $X^{\ast\ast}$. Thus by Lemma III.2.14 from \cite{HWW} we get that $x^\ast_0$ is a point of weak$^\ast$-weak continuity for the identity map on $X^\ast_1$. See \cite{R} for an analysis of these points in ${\mathcal K}(X,Y)$.
\vskip 1em
As an illustration, we recall that any very smooth point of a $C(K)$ space is a Frech\'{e}t smooth point. It is well known that $C(K)$ is a $L^1$-predual space.
\begin{ex}Let $K$ be a compact set and suppose $k_0 \in C(K)$ is an isolated point. We have $C(K) = span\{\chi_{k_0}\} \bigoplus_{\infty} \{g \in C(K): supp(g) \subset (\{k_0\})^c\}$. Now for $f \in C(K)$, $d(f,span\{\chi_{k_0}\})< \|f\|$ if and only if $f$ attains its norm only at $k_0$. Hence any such $f$ is a Frech\'{e}t smooth point in $C(K)$.
\vskip 1em
Suppose $ f \in C(K)$ is a very smooth point. Since the norm attaining extreme point, one of,  $\pm \delta(k') \in \partial_e C(K)^\ast_1$ is also a point of weak$^\ast$-weak continuity for the identity map on the set of measures, $C(K)^\ast_1$, we see  that $f$ attains its norm at the isolated point $k'$ (and only here) of $K$. Clearly $d(f,span\{\chi_{k'}\})< \|f\|$. Thus any very smooth point of $C(K)$ is a Frech\'{e}t smooth point of  $C(K)$.
\end{ex}
\vskip 1em
The condition assumed in the following Theorem is satisfied apart from $L^1$-predual spaces, by any function algebra on a compact set $\Omega$
(closed subalgebra that contains constants and separates point of $\Omega$) and by the space $A(K)$, of affine continuous functions on $K$, equipped with the supremum norm, where $K$ is a compact convex set such that every point of $\partial_e K$ is a split face of $K$. See \cite{HWW}, pages 5 and 233 for the details. Also note that in this situation,  for $x_1^\ast,~x^\ast_2 \in \partial_e X_1^\ast$, if $x_1^\ast \neq \pm x_2^\ast$, $\|x^\ast_1 - x^\ast_2\|= \|x^\ast_1\|+\|x^\ast_2\|=2$. Thus $\partial_eX^\ast_1$ is a norm-discrete set.
\vskip 1em
\begin{thm} Let $X$ be such that for all $x^\ast \in \partial_e X^\ast_1$, $ X^\ast = span\{x^\ast\} \bigoplus_1 N$ for a closed subspace $N$.  Let $x \in X$ be a very smooth point. Then $x$ is a Frech\'{e}t smooth point.
\end{thm}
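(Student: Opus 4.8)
The plan is to verify the strong exposure criterion of Corollary 1.1.5 from \cite{DGZ}. Normalising to $\|x\|=1$, let $x^\ast_0 \in \partial_e X^\ast_1$ be the unique state with $x^\ast_0(x)=1$ furnished by smoothness. I must show that whenever $\{x^\ast_n\}_{n \geq 1} \subset X^\ast_1$ satisfies $x^\ast_n(x) \ra 1$, one has $x^\ast_n \ra x^\ast_0$ in norm. Since $x^\ast_0 \in \partial_e X^\ast_1$, the hypothesis supplies the $L$-decomposition $X^\ast = span\{x^\ast_0\} \bigoplus_1 N$; accordingly I write $x^\ast_n = \alpha_n x^\ast_0 + y^\ast_n$ with $y^\ast_n \in N$ and, crucially, $\|x^\ast_n\| = |\alpha_n| + \|y^\ast_n\|$.

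First I would establish weak$^\ast$ convergence exactly as in the proof of Theorem 1: every weak$^\ast$-accumulation point $z^\ast$ of $\{x^\ast_n\}$ lies in $X^\ast_1$ and satisfies $z^\ast(x)=1$, so by smoothness $z^\ast = x^\ast_0$; weak$^\ast$-compactness of $X^\ast_1$ then forces $x^\ast_n \ra x^\ast_0$ in the weak$^\ast$ topology. This step uses only smoothness, not very smoothness.

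The decisive step is to upgrade this to weak convergence, and this is where very smoothness of $x$ enters. As recalled just before the statement, $x^\ast_0$ is the unique norm-preserving extension to $X^{\ast\ast}$ of $x^\ast_0|_X$, so by Lemma III.2.14 of \cite{HWW} it is a point of weak$^\ast$-to-weak continuity of the identity on $X^\ast_1$. Hence $x^\ast_n \ra x^\ast_0$ weakly. Since the $L$-projection onto $span\{x^\ast_0\}$ is bounded and therefore weak-to-weak continuous, $\alpha_n x^\ast_0 \ra x^\ast_0$ weakly; evaluating at $x \in X \subset X^{\ast\ast}$ gives $\alpha_n \ra 1$.

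Finally I would convert this into norm control using the $\bigoplus_1$ structure: from $\|x^\ast_n\| = |\alpha_n| + \|y^\ast_n\| \leq 1$ together with $|\alpha_n| \ra 1$ we obtain $\|y^\ast_n\| \ra 0$, whence
\[ \|x^\ast_n - x^\ast_0\| = |\alpha_n - 1| + \|y^\ast_n\| \ra 0. \]
This is precisely the strong exposure condition, so $x$ is Frech\'{e}t smooth. The main obstacle is exactly this passage from weak$^\ast$ to norm convergence: the $L$-summand hypothesis alone is insufficient, since the $L$-projection need not be weak$^\ast$-continuous, while very smoothness alone yields only weak convergence. It is the interplay of the two — very smoothness to reach the weak topology and the $\bigoplus_1$ splitting to reach the norm — that drives the argument.
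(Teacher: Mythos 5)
Your proof is correct, but it takes a genuinely different route from the paper's. The paper argues in the bidual: it dualizes $X^\ast = span\{x^\ast_0\} \bigoplus_1 N$ to obtain $X^{\ast\ast} = ker(x^\ast_0) \bigoplus_\infty N^\bot$ with $N^\bot$ one-dimensional, uses very smoothness to place $x$ at distance less than $\|x\|$ from that one-dimensional $M$-summand, and then invokes Theorem 1 (applied to $X^{\ast\ast}$) to get Frech\'{e}t smoothness in $X^{\ast\ast}$ and hence, by restriction, in $X$. You instead stay in $X^\ast$ and verify the \v{S}mulyan/strong-exposure criterion directly: weak$^\ast$ convergence from smoothness, the upgrade to weak convergence via the weak$^\ast$-to-weak continuity of the identity at $x^\ast_0$ (the consequence of very smoothness that the paper records via Lemma III.2.14 of \cite{HWW}), extraction of $\alpha_n \to 1$ through the weak-to-weak continuous $L$-projection, and finally the $\bigoplus_1$ splitting to convert this into norm convergence. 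Each step is sound; in particular the observation that direct evaluation at $x$ does not give $\alpha_n \to 1$ (because $y^\ast_n(x)$ is not controlled a priori) correctly identifies why the weak-convergence step is indispensable. What the paper's route buys is that the conclusion propagates for free to all higher even duals via Theorem 1; what your route buys is transparency about how the two hypotheses interact, and it avoids the paper's over-compressed identification of $N^\bot$ with $span\{x\}$, which in general should read only as $d(x, N^\bot) < \|x\|$.
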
	
\begin{proof}
Let $x^\ast_0 \in  \partial_e X^\ast_1$ be the unique functional such that $x_0^\ast(x) = \|x\|$. We have $X^\ast = span\{x_0^\ast\}\bigoplus_1 N$,
for some closed subspace $N \subset X^\ast$. Since
$X^{\ast\ast} = span\{x_0^\ast\}^\bot (= ker(x_0^\ast)) \bigoplus_{\infty} span\{x\}$ . Thus $x$ is a Frech\'{e}t smooth point of $X^{\ast\ast}$ and hence of $X$.
\end{proof}
 We next consider $QP$-points. If a unit vector $x$  is a smooth point with the associated extreme point $x^\ast$, then if $x$ is a $QP$-point, the conclusion from the definition, is $z$ is smooth, $x^\ast(z)=1$. The relation to differentiability of this notion, was shown in \cite{GI} when they prove that the norm is strongly subdifferentiable at a $QP$-point $x$, i.e., $lim_{ t \rightarrow 0^+} \frac{\|x+ty\|-\|x\|}{t}$ exists uniformly over $y$ in the unit ball of $X$. Converse implication need not hold. Thus one is looking for stronger differentiability conditions to determine the polyherdal geometry of the ball.
\begin{thm}
  Let $X$ be such that for all $x^\ast \in \partial_e X^\ast_1$, $ X^\ast = span\{x^\ast\} \bigoplus_1 N$ for a closed subspace $N$.  Let $x \in X$ be a unit vector and  very smooth point. Then $x$ is a $QP$-point. If $X$ is also a $L^1$-predual space, then $x$ continues to be a $QP$-point in all higher duals of even order of $X$.
\end{thm}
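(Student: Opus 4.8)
The plan is to establish the first assertion by a single contradiction argument combining the strong exposition supplied by the preceding theorem with the $L$-summand geometry of $X^\ast$, and then to propagate the conclusion to the even duals by showing the hypotheses are inherited there.

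First I would fix the geometry. Being very smooth, $x$ is smooth, so $S_x = \{x_0^\ast\}$ is a singleton with $x_0^\ast \in \partial_e X^\ast_1$, and by the preceding theorem $x$ is Frech\'{e}t smooth; hence $x_0^\ast$ strongly exposes $x$. The hypothesis $X^\ast = span\{x_0^\ast\} \bigoplus_1 N$ shows that every $e \in \partial_e X^\ast_1$ other than $\pm x_0^\ast$ lies in $N$ with $\|e\| = 1$, so that $\|e - x_0^\ast\| = 1 + \|e\| = 2$; thus $x_0^\ast$ sits at distance $2$ from the rest of $\partial_e X^\ast_1$.

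Since $S_x = \{x_0^\ast\}$, it suffices (as noted in the introduction) to produce $\delta > 0$ so that $\partial_e S_z \subseteq \{x_0^\ast\}$ for every unit vector $z$ with $\|z - x\| < \delta$. Suppose not: choose unit vectors $z_n \to x$ and $z_n^\ast \in \partial_e S_{z_n} \subseteq \partial_e X^\ast_1$ with $z_n^\ast \neq x_0^\ast$. From $z_n^\ast(x) = z_n^\ast(z_n) + z_n^\ast(x - z_n) \geq 1 - \|x - z_n\|$ we get $z_n^\ast(x) \to 1$, so strong exposition forces $z_n^\ast \to x_0^\ast$ in norm. For large $n$, $z_n^\ast(x) > 0$ rules out $z_n^\ast = -x_0^\ast$, hence $z_n^\ast \neq \pm x_0^\ast$ and the distance-$2$ estimate gives $\|z_n^\ast - x_0^\ast\| = 2$, contradicting the convergence. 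So $x$ is a $QP$-point.

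For the final claim I would run this argument inside each even dual. If $X$ is an $L^1$-predual, the duality between abstract $L$- and $M$-spaces (Kakutani) makes every odd dual an abstract $L$-space and every even dual $X^{(2k)}$ with $k \geq 1$ a $C(K)$-type space; together with $X$ itself these are all $L^1$-preduals, so each satisfies the $L$-summand hypothesis. Moreover $x$, being Frech\'{e}t smooth in $X$, remains Frech\'{e}t (and so very) smooth in each even dual, as recorded in the introduction. Applying the first part with $X$ replaced by $X^{(2k)}$ then shows $x$ is a $QP$-point there for every $k$. I expect the main obstacle to lie not in the contradiction argument, which is immediate once strong exposition and the distance-$2$ gap are in hand, but in the bookkeeping for this last step: namely confirming that the $L$-summand hypothesis and the very-smoothness of the canonical image of $x$ genuinely persist through the tower of even duals, so that strong exposition is available at each level.
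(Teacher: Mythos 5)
Your proposal is correct and follows essentially the same route as the paper: invoke the preceding theorem to get that $x_0^\ast$ strongly exposes $x$, run the contradiction argument with $z_n \to x$ and $z_n^\ast \in \partial_e S_{z_n}$, and collide norm convergence $z_n^\ast \to x_0^\ast$ with the norm-discreteness of $\partial_e X^\ast_1$, then pass to even duals via the $C(K)$ structure of $X^{\ast\ast}$ for an $L^1$-predual. Your direct estimate $z_n^\ast(x) \ge 1 - \|x - z_n\|$ is a slight streamlining of the paper's weak$^\ast$-accumulation-point step, but the argument is the same in substance.
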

\begin{proof} By Theorem 3 we have that $x$ is a Frech\`{e}t smooth point and $x^\ast(x)= 1$. Suppose $x$ is not a $QP$-point. Then in view of our preceding remarks, there exists a sequence $\{z_n\}_{n \geq 1}$ of unit vectors, $z_n \rightarrow x$ and a sequence $\{z_n^\ast\}_{n \geq 1} \subset S_z \cap \partial_e X^\ast_1$ and $z_n^\ast \neq x^\ast$ for all $n$. If $z^\ast$ is any $weak^\ast$-accumulation point of this sequence, we get, $z^\ast(x) = 1$. Thus $z^\ast = x^\ast$. Therefore $z^\ast_n \rightarrow x^\ast$ in the weak$^\ast$-topology. Hence $z^\ast_n(x) \rightarrow 1$. Since $x^\ast$ strongly exposes $x$ we conclude that $z^\ast_n \rightarrow x^\ast$ in the norm. This contradicts the norm-discreteness of $\partial_e X^\ast_1$.
\vskip 1em
If $X$ is a $L^1$-predual, since $X^{\ast\ast} = C(K)$ we again have same extremal property in $\partial_e C(K)^\ast_1$. Thus $x$ is a $QP$-point of $X^{\ast\ast}$ and this procedure can be continued.
\end{proof}
\vskip 1em

These ideas allow us easily to extend Example 2 to vector-valued case. For a compact Hausdorff space $\Omega$, let $C(\Omega,X)$ denote the set of $X$-valued continuous functions on $\Omega$ equipped with the supremum norm. We recall that the dual space $C(\Omega,X)$ can be identified with space of $X^\ast$-valued measures, with the total variation norm and $$\partial_e C(\Omega,X)_1^\ast = \{\delta(k)\otimes x^\ast: k \in \Omega~,~x^\ast \in \partial_e X^\ast_1\}.$$
\vskip 1em
For a later reference we note the canonical embedding (via composition) of $C(\Omega,X) \subset {\mathcal K}(X^\ast, C(\Omega))$.
\begin{prop} Suppose $X$ is a Banach space such that any very smooth point of $X$ is Frech\'{e}t smooth. Then the same is true of $C(\Omega,X)$.
\end{prop}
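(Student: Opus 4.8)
The plan is to follow the template of Example 2, reducing the problem to the $\ell^\infty$-sum decomposition exploited in Theorem 1. Let $f \in C(\Omega,X)$ be a very smooth point with $\|f\|=1$, and let $\Phi_0 \in \partial_e C(\Omega,X)^\ast_1$ be the unique norm-attaining functional. By the description of the extreme points recalled above, $\Phi_0 = \delta(k_0)\otimes x_0^\ast$ for some $k_0\in\Omega$ and $x_0^\ast\in\partial_e X^\ast_1$, and $\Phi_0(f)=x_0^\ast(f(k_0))=1$ forces $\|f(k_0)\|=1=\|f\|$ with $x_0^\ast(f(k_0))=\|f(k_0)\|$. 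First I would record two consequences of the uniqueness of $\Phi_0$: (i) $f$ attains its norm only at $k_0$, for if $\|f(k_1)\|=1$ with $k_1\neq k_0$ then any $x_1^\ast\in\partial_e X^\ast_1$ supporting $f(k_1)$ produces a second supporting functional $\delta(k_1)\otimes x_1^\ast\neq\Phi_0$; and (ii) $f(k_0)$ is a smooth point of $X$ with supporting functional $x_0^\ast$, since two functionals supporting $f(k_0)$ in $X^\ast$ would give two functionals $\delta(k_0)\otimes(\cdot)$ supporting $f$.

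The key step is to show that $k_0$ is an \emph{isolated} point of $\Omega$; this is the only place where very smoothness (as opposed to mere smoothness) is used. By the property of very smooth points recalled in the paragraph following Theorem 1, $\Phi_0=\delta(k_0)\otimes x_0^\ast$ is a point of weak$^\ast$-weak continuity for the identity map on $C(\Omega,X)^\ast_1$, the ball of $X^\ast$-valued measures. Suppose $k_0$ were not isolated and choose a net $k_\alpha\to k_0$ with $k_\alpha\neq k_0$. Then $\mu_\alpha:=\delta(k_\alpha)\otimes x_0^\ast$ lies in the unit ball and, by continuity of each $g\in C(\Omega,X)$, converges weak$^\ast$ to $\Phi_0$; hence it must converge weakly. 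But $\Psi(\mu):=\langle \mu(\{k_0\}),f(k_0)\rangle$ (evaluation of the $X^\ast$-valued measure on the singleton $\{k_0\}$, paired with $f(k_0)\in X$) satisfies $|\Psi(\mu)|\le |\mu|(\{k_0\})\le\|\mu\|$, so $\Psi$ is a norm-one element of $C(\Omega,X)^{\ast\ast}$; yet $\Psi(\mu_\alpha)=0$ for every $\alpha$ while $\Psi(\Phi_0)=x_0^\ast(f(k_0))=1$, contradicting weak convergence. Hence $k_0$ is isolated.

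Isolation yields the $M$-summand decomposition $C(\Omega,X)=M\bigoplus_\infty N$, where $M=\{g:g\equiv 0 \text{ off } k_0\}$ is isometric to $X$ via $g\mapsto g(k_0)$ and $N=\{h:h(k_0)=0\}$. Writing $f=f_M+f_N$ with $f_M\leftrightarrow f(k_0)$, compactness of $\Omega\setminus\{k_0\}$ together with (i) gives $\|f_N\|=\max_{k\neq k_0}\|f(k)\|<1=\|f_M\|$. Passing to biduals, $C(\Omega,X)^{\ast\ast}=X^{\ast\ast}\bigoplus_\infty N^{\ast\ast}$, and on $C(\Omega,X)^{\ast\ast\ast}=X^{\ast\ast\ast}\bigoplus_1 N^{\ast\ast\ast}$ the strict inequality $\|f_N\|<\|f_M\|$ forces, exactly as in the proof of Theorem 1, any norm-one functional attaining $\|f\|$ to vanish on $N^{\ast\ast\ast}$ and to restrict to a supporting functional of $f(k_0)$ on $X^{\ast\ast}$. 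Thus smoothness of $f$ in its bidual makes $f(k_0)$ smooth in $X^{\ast\ast}$; with (ii), $f(k_0)$ is a very smooth point of $X$.

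Finally I would invoke the hypothesis: $f(k_0)$ is a Frech\'{e}t smooth point of $X\cong M$, so by the strong-exposure characterization recalled in the introduction $x_0^\ast$ strongly exposes $f(k_0)$. It then remains to promote this to strong exposure of $f$ by $\Phi_0$, which is the same computation as in Theorem 1: for $(x_n^\ast,y_n^\ast)$ in the ball of $X^\ast\bigoplus_1 N^\ast$ with $x_n^\ast(f(k_0))+y_n^\ast(f_N)\to 1$, the gap $\|f_N\|<1$ forces $\|y_n^\ast\|\to 0$ and $x_n^\ast(f(k_0))\to 1$, whence $x_n^\ast\to x_0^\ast$ in norm and $(x_n^\ast,y_n^\ast)\to(x_0^\ast,0)=\Phi_0$. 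Therefore $f$ is Frech\'{e}t smooth in $C(\Omega,X)$. I expect the isolation of $k_0$ to be the main obstacle, since it is the single point at which the full strength of very smoothness enters; the surrounding steps are the $\ell^\infty$-sum bookkeeping already present in Theorem 1.
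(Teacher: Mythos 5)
Your proposal is correct and follows essentially the same route as the paper: identify the unique norm-attaining extreme functional $\delta(k_0)\otimes x_0^\ast$, use the weak$^\ast$-weak continuity coming from very smoothness to show $k_0$ is isolated (the vector-valued analogue of Example 2), pass to the $M$-summand decomposition $C(\Omega,X)=X\bigoplus_\infty C(\{k_0\}^c,X)$, deduce that $f(k_0)$ is very smooth, and conclude via the Theorem 1 computation. The paper states all of this as a brief sketch; your write-up simply supplies the details it omits.
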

\begin{proof} Let $f \in C(\Omega,X)$ be a very smooth point. Arguments similar to Example 2, will give, $\|f\| = \|f(k_0)\|$ for an isolated point $k_0 \in \Omega$. Thus $C(K,X) = X \bigoplus_{\infty} C(\{k_0\}^c,X)$. Now it is easy to see, $f(k_0)$ is a very sooth and hence Frech\'{e}t smooth point of $X$ and thus $f$ is a Frech\'{e}t smooth point of $C(K,X)$.
\end{proof}
\vskip 1em
In order to prove the $QP$-point version we again use a simple idea of $M$-summands.
\begin{lem}
Suppose $X = M \bigoplus_{\infty}N$. Let $x = m+n$, $\|m\|=1~,~\|n\|<1$. Assume $m$ is a QP point of $M$. Then $x$ is a $QP$ point of $X$.
Further, if $m$ is a QP point of $M^{\ast\ast}$, then $x$ is a QP point of $X^{\ast\ast}$.
\end{lem}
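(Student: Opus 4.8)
The plan is to compute the state space $S_x$ explicitly using the $L$-decomposition $X^\ast = M^\ast \bigoplus_1 N^\ast$ dual to the given $M$-summand decomposition, and then transfer the $QP$-property from $m$ to $x$ through this identification. Since the sum is an $\ell^\infty$-sum and $\|n\| < 1$, first note $\|x\| = \max\{\|m\|,\|n\|\} = 1$, so $x$ is a unit vector. Writing a functional of norm at most one as $x^\ast = (\mu,\nu)$ with $\mu \in M^\ast$, $\nu \in N^\ast$ and $\|\mu\| + \|\nu\| \leq 1$, the condition $x^\ast(x) = \mu(m) + \nu(n) = 1$ together with the chain $\mu(m) + \nu(n) \leq \|\mu\| + \|\nu\|\,\|n\| \leq \|\mu\| + \|\nu\| \leq 1$ forces equality throughout, hence $\|\nu\|(1 - \|n\|) = 0$. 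Since $\|n\| < 1$ this gives $\nu = 0$, and then $\mu \in S_m$. Thus $S_x = \{(\mu,0) : \mu \in S_m\}$, so $S_x$ is canonically identified with $S_m$.

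Next, let $\delta_0 > 0$ be the radius witnessing that $m$ is a $QP$-point of $M$, and set $\delta = \min\{\delta_0,\, 1 - \|n\|\}$. For a unit vector $z = m' + n'$ with $\|z - x\| < \delta$, the $\ell^\infty$-norm gives $\|m' - m\| < \delta$ and $\|n' - n\| < \delta$; the latter yields $\|n'\| < \|n\| + \delta \leq 1$, which forces $\|m'\| = 1$ because $z$ is a unit vector. Applying the same state-space computation to $z$ shows $S_z = \{(\mu,0) : \mu \in S_{m'}\}$. Since $m'$ is a unit vector of $M$ with $\|m' - m\| < \delta \leq \delta_0$, the $QP$-property of $m$ gives $S_{m'} \subset S_m$, whence $S_z \subset S_x$. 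This proves $x$ is a $QP$-point of $X$, with the same $\delta$ working throughout.

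Finally, for the bidual statement I would invoke the standard fact that an $M$-summand decomposition passes to the bidual, $X^{\ast\ast} = M^{\ast\ast} \bigoplus_{\infty} N^{\ast\ast}$, where $x = m + n$ still satisfies $\|m\| = 1$ in $M^{\ast\ast}$ and $\|n\| < 1$. The hypothesis that $m$ is a $QP$-point of $M^{\ast\ast}$ then lets me apply the first part verbatim with $(M^{\ast\ast}, N^{\ast\ast})$ in place of $(M, N)$, yielding that $x$ is a $QP$-point of $X^{\ast\ast}$. I expect the only delicate point to be the clean identification of $S_x$ with $S_m$, specifically verifying that the $N^\ast$-component of every state genuinely vanishes (which is exactly where the strict inequality $\|n\| < 1$ is used); the $QP$-transfer and the bidual step are then essentially formal.
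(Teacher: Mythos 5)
Your proof is correct and follows essentially the same route as the paper's: pass to the dual decomposition $X^\ast = M^\ast \bigoplus_1 N^\ast$, show that every state of a nearby unit vector $z$ must annihilate the $N$-component because the $N$-parts have norm strictly less than $1$, and then transfer the $QP$-property from $m$ to $x$. You are in fact slightly more careful than the paper, which leaves implicit the shrinking of the radius to $\min\{\delta_0,\,1-\|n\|\}$ that is needed to guarantee $\|n_1\|<1$ and hence $\|m_1\|=1$ for the perturbed vector.
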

\begin{proof}

Let $\delta$ be as in the definition for $QP$-point  $m$.  Let $\|z\|=1$, $Z = m_1 +n_1$, $\|z-x\|< \delta$.
Suppose $x^\ast \in \partial_e S_z$ and $x^\ast \in M^\bot$. Then $1= x^\ast(z)= x^\ast(n)$ contradicting, $\|n\|<1$. So $x^\ast \in M^\ast$.
Also $x^\ast(z)= x^\ast(m_1)$. Since $\|m_1-m\|< \delta$, we get $x^\ast(m)= x^\ast(x) =1$. Hence the conclusion.
\vskip 1em
In particular, suppose $X = M \bigoplus_{\infty} N$ and $m \in M$ is a unit vector and QP point of $M$. Then it is a QP point of $X$.
\end{proof}
\begin{prop}

  Let $X$ be a Banach space such that any very smooth point is a  $QP$-point. Any very smooth point of $C(\Omega,X)$ is a $QP$-point.
\end{prop}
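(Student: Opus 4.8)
The plan is to mirror the proof of Proposition 5, replacing Fr\'echet smoothness by the $QP$-property and feeding the outcome into the $M$-summand Lemma 6. Let $f\in C(\Omega,X)$ be a very smooth point; after scaling assume $\|f\|=1$. First I would carry out the analysis of Example 2 in the present vector-valued setting. The unique norming functional of $f$ is an extreme point of $C(\Omega,X)^\ast_1$, hence of the form $\delta(k_0)\otimes x_0^\ast$ with $k_0\in\Omega$ and $x_0^\ast\in\partial_e X^\ast_1$; evaluating shows $x_0^\ast(f(k_0))=\|f(k_0)\|=1$. Very smoothness makes this functional a point of weak$^\ast$-weak continuity for the identity map on $C(\Omega,X)^\ast_1$, and exactly as in Example 2 this forces $k_0$ to be an isolated point of $\Omega$ and $f$ to attain its norm only at $k_0$.

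Next I would record the $M$-decomposition furnished by the isolated point $k_0$, namely $C(\Omega,X)=X\bigoplus_{\infty}C(\{k_0\}^c,X)$, where the first summand is evaluation at $k_0$ and $\{k_0\}^c$ is compact. Writing $f=m+n$ with $m=f(k_0)$ in the $X$-summand and $n=f|_{\{k_0\}^c}$, we have $\|m\|=\|f(k_0)\|=1$, while $\|n\|=\sup_{k\neq k_0}\|f(k)\|<1$, since $f$ attains its norm only at $k_0$ and the supremum is attained over the compact set $\{k_0\}^c$. The remaining ingredient is that $m=f(k_0)$ is a very smooth point of $X$; as in Proposition 5 this follows from the fact that $x_0^\ast$ is the unique functional norming $f(k_0)$ and inherits the weak$^\ast$-weak continuity property from $\delta(k_0)\otimes x_0^\ast$.

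With these pieces in place the conclusion is immediate: by hypothesis the very smooth point $f(k_0)$ is a $QP$-point of $X$, so $m$ is a $QP$-point of the $M$-summand $X$ with $\|m\|=1$ and $\|n\|<1$, and Lemma 6 applies directly to give that $f=m+n$ is a $QP$-point of $C(\Omega,X)$. The only delicate part is the first paragraph---extracting the isolated point $k_0$ together with the very smoothness of $f(k_0)$ from the very smoothness of $f$---but this is precisely the reasoning already used in Example 2 and Proposition 5, so no genuinely new obstacle appears.
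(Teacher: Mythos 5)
Your proposal is correct and follows essentially the same route as the paper: extract the isolated point $k_0$ and the decomposition $C(\Omega,X)=X\bigoplus_{\infty}C(\{k_0\}^c,X)$ as in Proposition 5, observe that $f(k_0)$ is a very smooth (hence, by hypothesis, $QP$) point with $\|f|_{\{k_0\}^c}\|<1$, and invoke Lemma 6. Your write-up is in fact slightly more careful than the paper's, e.g.\ in justifying $\|f|_{\{k_0\}^c}\|<1$ via compactness of $\{k_0\}^c$.
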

\begin{proof} Let $f$, $\|f\|=1$ be a very smooth point. As in the proof of Proposition 5 , $C(\Omega,X) = X \bigoplus_{\infty} C(\{k_0\}^c,X)$, for an isolated point $k_0$. Since $\|f\|= \|f(k_0)\|$ and $f(k_o)$ is a very smooth point, we get that $f(k_0)$ is a $QP$ point. As $\|f/\{k_0\}^c\|<1$, the conclusion follows from the above Lemma.

\end{proof}
\vskip 1em
We now prove any $L^1$-predual space with a weak$^\ast$-closed extreme boundary, exhibits the same behaviour as $C(K)$-spaces with respect to Frech\'{e}t smooth points, as in Example 2.
\vskip 1em
\begin{thm} Let $X$ be a $L^1$-predual space such that $\partial_e X^\ast_1$ is a weak$^\ast$-closed set. Let $x \in X$ be a Frech\'{e}t smooth point. There is a $x_0\in X$ such that $d(x, span\{x_0\})< \|x\|$ and $X = span\{x_0\} \bigoplus_{\infty} N$ for some closed subspace $N$.
	
\end{thm}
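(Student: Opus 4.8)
The plan is to reverse the reasoning of Example 2: from the Frech\'{e}t smoothness of $x$ I will extract an isolated ``peak'' and then manufacture the one-dimensional $M$-summand carrying it. By the equivalent formulation recalled after Corollary 1.1.5 of \cite{DGZ}, the unique norming functional $x_0^\ast\in\partial_e X^\ast_1$ (so $x_0^\ast(x)=\|x\|$) strongly exposes $x$; equivalently the slices $S(x,\e)=\{f\in X^\ast_1 : f(x)>\|x\|-\e\}$ satisfy $\mathrm{diam}\,S(x,\e)\to 0$ as $\e\to 0^+$. Since $X$ is a $L^1$-predual, $span\{x_0^\ast\}$ is an $L$-summand, so $X^\ast=span\{x_0^\ast\}\bigoplus_1 N^\ast$; the whole point is to show that the $L$-complement $N^\ast$ is weak$^\ast$-closed, as this is exactly equivalent to the existence of $x_0\in X$ with $span\{x_0\}$ a one-dimensional $M$-summand and $x_0^\ast$ its associated functional.

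First I would show $x_0^\ast$ is a weak$^\ast$-isolated point of $E:=\partial_e X^\ast_1$. By the remark preceding Theorem 3 the set $E$ is norm-discrete: $\|e-x_0^\ast\|=2$ whenever $e\in E$, $e\neq\pm x_0^\ast$, and also $\|(-x_0^\ast)-x_0^\ast\|=2$. Choosing $\e$ so small that $\mathrm{diam}\,S(x,\e)<2$, the slice $S(x,\e)$ (a weak$^\ast$-open set containing $x_0^\ast$) meets $E$ only at $x_0^\ast$, so $\{x_0^\ast\}$ is relatively weak$^\ast$-open in $E$; applying $f\mapsto -f$, the point $-x_0^\ast$ is isolated as well.

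The crux is to pass from ``$x_0^\ast$ weak$^\ast$-isolated'' to ``$N^\ast$ weak$^\ast$-closed'', i.e. to produce $x_0$ inside $X$, and this is exactly where the hypothesis that $E$ is weak$^\ast$-closed is indispensable. It makes $E$ weak$^\ast$-compact, so that $a\mapsto \hat a:=a|_E$ embeds $X$ isometrically into the space $C_\sigma(E)$ of weak$^\ast$-continuous functions on $E$ that are odd under $f\mapsto -f$ (norms are attained on $E$ by Krein--Milman). Since $\pm x_0^\ast$ are isolated, the odd function $g=\chi_{\{x_0^\ast\}}-\chi_{\{-x_0^\ast\}}$ lies in $C_\sigma(E)$; the facial ``split face'' structure of a $L^1$-predual with closed extreme boundary --- the Bauer phenomenon behind the identification $A(K)=C(\partial_e K)$ for a Bauer simplex (cf. the structure theory in \cite{L}) --- is what I would invoke to conclude that this isometry is onto, so that $g=\hat{x_0}$ for some $x_0\in X$. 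I expect this surjectivity (equivalently, that the isolated extreme point $x_0^\ast$ is carried by a genuine $M$-summand of $X$, not merely of $X^{\ast\ast}$) to be the only non-routine step; the $L/M$-duality bookkeeping around it is formal. Granting it, $x_0^\ast(x_0)=1$ while $e(x_0)=0$ for every $e\in E\setminus\{\pm x_0^\ast\}$, whence, both having codimension one, $N^\ast=\ker(x_0)$ is weak$^\ast$-closed and $X=span\{x_0\}\bigoplus_{\infty}N$ with $N=\ker(x_0^\ast)$ and $M$-projection $a\mapsto x_0^\ast(a)x_0$.

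Finally the distance estimate is immediate from the gap already produced. For small $\e$, every $e\in E\setminus\{\pm x_0^\ast\}$ lies outside $S(x,\e)$, and so does $-e$, giving $|e(x)|\le\|x\|-\e$ there; since $\hat{x_0}=g$ vanishes off $\{\pm x_0^\ast\}$ and $x_0^\ast(x)=\|x\|$, the element $x-x_0^\ast(x)\,x_0$ vanishes at $\pm x_0^\ast$ and is bounded by $\|x\|-\e$ on the rest of $E$. Hence $d(x,span\{x_0\})\le\|x-x_0^\ast(x)x_0\|\le\|x\|-\e<\|x\|$, the strict inequality that closes the converse to Theorem 1.
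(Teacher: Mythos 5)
Your argument is correct, but it takes a genuinely different route from the paper's. The paper works entirely on the dual side: it writes $X^\ast = span\{x_0^\ast\}\bigoplus_1 N$, uses the fact that a Fr\'echet (hence very) smooth point makes $x_0^\ast$ a point of weak$^\ast$-weak continuity of the identity on $X^\ast_1$, runs a net argument to show that $\partial_e N_1 = \partial_e X^\ast_1\setminus\{\pm x_0^\ast\}$ is weak$^\ast$-compact, and then invokes Lemma 1.1 of \cite{R1} to conclude that the $L$-summand $N$ is weak$^\ast$-closed, which dualizes to the desired $M$-decomposition of $X$. You instead exploit the full quantitative strength of Fr\'echet smoothness (the shrinking-slice criterion) together with the norm-discreteness of $\partial_e X^\ast_1$ to show $\pm x_0^\ast$ are weak$^\ast$-isolated in the extreme boundary, and then produce $x_0$ explicitly as the odd indicator of $\{\pm x_0^\ast\}$ inside the representation $X\cong C_\sigma(\partial_e X^\ast_1)$. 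The one step you rightly flag as non-routine --- surjectivity of the evaluation map onto $C_\sigma(\partial_e X^\ast_1)$ --- is precisely the Lindenstrauss--Wulbert characterization of $C_\sigma$-spaces among $L^1$-preduals with weak$^\ast$-closed extreme boundary (see \cite{L}, Chapter 7, Section 21), so it is a legitimate but heavy import; it plays exactly the role that Lemma 1.1 of \cite{R1} plays in the paper, and you should cite it by name rather than gesture at the Bauer-simplex analogy. What your route buys is a concrete $x_0$ and a quantitative bound $d(x,span\{x_0\})\le\|x\|-\e$ read off from the slice; what the paper's route buys is a lighter toolkit, needing only the weak$^\ast$-closedness of a single $L$-summand rather than a global $C_\sigma$-representation of $X$.
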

\begin{proof}
Let $x^\ast_0 \in  \partial_e X^\ast_1$ be the unique functional such that $x_0^\ast(x) = \|x\|$. We have $X^\ast = span\{x_0^\ast\}\bigoplus_1 N$.
For some closed subspace $N \subset X^\ast$. We shall show that $N$ is a weak$^\ast$-closed set. Granting this, we get a $x_0 \in X$ such that $X = span\{x_0\} \bigoplus_{\infty} ker(x^\ast_0)$, where $span\{x_0\}^\bot = N$. By uniqueness of $x^\ast_0$, we get that $d(x, span\{x_0\}) < \|x\|$.
\vskip 2em
To see the claim, we show that $N \cap \partial_e X^\ast_1$ is a weak$^\ast$-closed set in $\partial_e X^\ast_1$.  We recall that since $x_0$ is a very smooth pint, $x_0$ is a point of weak-weak$^\ast$-continuity for the identity map on the unit ball $X^\ast_1$. If a net $\{x^\ast_{\alpha}\} \subset N \cap \partial_e X^\ast_1$ is such that $x^\ast_{\alpha} \rightarrow \tau$ in the weak$^\ast$-topology, then by hypothesis, $\tau \in \partial_e X^\ast_1$. Also we may assume w. l. o. g that all the $x^\ast_{\alpha}$'s and $\tau$ are distinct.
Now either $ \tau = \pm x_0^\ast$ or $\tau \in \partial_e N_1$. In the former case, by the continuity assumption, $x_{\alpha}^\ast \rightarrow \tau$ in the weak-topology. Since $N$ is weakly closed, $x^\ast_0 \in N$. A contradiction. Therefore $\partial_e N_1$ is a weak$^\ast$-compact set in $\partial_e X^\ast_1$.
Since $X$ is a $L^1$-predual space, it follows from Lemma 1.1 in \cite{R1}, that $N$ is a weak$^\ast$-closed subspace.
\end{proof}
\vskip 1em
\begin{rem}It is easy to see that any smooth point of $c_0$ gives raise to a decomposition $c_0 = span\{e_n\} \bigoplus_{\infty} M$ for some closed subspace $M$ (which is a copy of $c_0$) and for some positive integer $n$. Thus the above assumption of weak$^\ast$-closedness is not necessary for the existence of very smooth points in $L^1$-predual spaces.
\end{rem}
\vskip 1em
The following proposition further enlarges the class of spaces where our results apply. We recall from Chapter 1 in \cite{HWW} that a closed subspace  $J \subset X$ is said to be a $M$-ideal, if $X^\ast = J^\bot \bigoplus J^\ast$. For example, in  any $JB^\ast$-triple closed ideals are precisely $M$-ideals (see \cite{BT} Theorem 3.2). It is also easy to see that if $X$ is a $L^1$-predual space, then so is a $M$-ideal $J$ in $X$. Also the quotient space $X/J$ is again a $L^1$-predual space. More generally, if $X$ has the property `for all $x^\ast \in \partial_e X^\ast_1$, $span\{x^\ast\}$ is a $L$-summand', then both $J$ and $X/J$ have this property.
\begin{prop}
Suppose $X$ is a Banach space such that every very smooth point is Frech\'{e}t smooth. Then the same is true of a $M$-ideal $J \subset X$.
\end{prop}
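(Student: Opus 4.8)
The plan is to promote a very smooth point of $J$ to a very smooth point of $X$; once this is done, the hypothesis on $X$ makes it Fréchet smooth in $X$, and the heredity of Fréchet differentiability to subspaces (Lemma 2.1 of \cite{MR}) brings the conclusion back down to $J$. Throughout I would use two standard pieces of $M$-ideal duality from Chapter I of \cite{HWW}: the defining $L$-decomposition $X^\ast = J^\bot \bigoplus_1 J^\ast$, and its consequence that $J^{\ast\ast}$ sits as an $M$-summand of $X^{\ast\ast}$, namely $X^{\ast\ast} = J^{\ast\ast} \bigoplus_\infty (J^\bot)^\ast$, in a manner compatible with the canonical embeddings $J \subset J^{\ast\ast} \subset X^{\ast\ast}$ and $J \subset X \subset X^{\ast\ast}$ (the image of $j \in J$ lands in the $J^{\ast\ast}$ summand because it annihilates $J^\bot$).

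First I would transfer smoothness from $J$ up to $X$. Fix a unit vector $j \in J$ that is very smooth in $J$, and let $\psi \in X^\ast_1$ satisfy $\psi(j) = 1$. Decomposing $\psi = \psi_1 + \psi_2$ with $\psi_1 \in J^\bot$, $\psi_2 \in J^\ast$ and $1 = \|\psi\| = \|\psi_1\| + \|\psi_2\|$, evaluation at $j \in J$ annihilates $\psi_1$, so $\psi_2(j) = 1$; this forces $\|\psi_2\| = 1$ and $\psi_1 = 0$. Hence $\psi$ lies in the $J^\ast$-part and restricts to a norm-one functional on $J$ attaining its norm at $j$. Smoothness of $j$ in $J$ determines this restriction uniquely, so $\psi$ is unique and $j$ is a smooth point of $X$.

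Next I would run the identical computation one floor up. Since $j$ is very smooth in $J$ it is smooth in $J^{\ast\ast}$, and $j$ sits inside the $M$-summand $J^{\ast\ast}$ of $X^{\ast\ast}$. Passing to $X^{\ast\ast\ast} = J^{\ast\ast\ast} \bigoplus_1 ((J^\bot)^\ast)^\ast$ and repeating the $\bigoplus_1$-argument above (this is exactly the smoothness analogue of the $M$-summand reasoning in Lemma 6) shows that the supporting functional of $j$ in $X^{\ast\ast}$ is unique, so $j$ is smooth in $X^{\ast\ast}$. Combining the two steps, $j$ is a very smooth point of $X$. By hypothesis $j$ is therefore Fréchet smooth in $X$, and since $J$ is a closed subspace of $X$, Lemma 2.1 of \cite{MR} gives that $j$ is Fréchet smooth in $J$, which is what we want.

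The only step I expect to require genuine care is the bidual identification: one must be sure that $J^{\ast\ast}$ embeds as an $M$-summand of $X^{\ast\ast}$ compatibly with the canonical embeddings, so that $j$ and its supporting functionals really sit where the computation needs them. This is precisely the $M$-ideal duality theory of \cite{HWW}; granting it, both smoothness transfers reduce to the same short $\bigoplus_1$/$\bigoplus_\infty$ calculation, and no further machinery is needed.
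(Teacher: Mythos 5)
Your proposal is correct and follows essentially the same route as the paper: both use the $M$-ideal duality $X^\ast = J^\bot \bigoplus_1 J^\ast$ and $X^{\ast\ast} = J^{\bot\bot} \bigoplus_\infty (J^\ast)^\bot$ to promote a very smooth point of $J$ to a very smooth point of $X$, then invoke the hypothesis on $X$ and the heredity of Fr\'{e}chet smoothness to closed subspaces. You merely spell out the $\bigoplus_1$ uniqueness computation at both levels, which the paper's terser proof leaves implicit.
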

\begin{proof} Let $j_0 \in J$ be a very smooth point. We note that $X^{\ast\ast} = J^{\bot\bot} \bigoplus_{\infty} (J^\ast)^\bot$. Now the unique functional $j^\ast_0 \in \partial_e J^\ast_1 \subset \partial_e J^{\ast\ast\ast}_1 $ is the only functional attaining its norm at $j_0 \in J^{\bot\bot}$. Hence $j_0$ is a very smooth point of $X$ and hence a Frech\'{e}t smooth point of $J$.
\end{proof}

We will now apply these ideas to analyse Frech\'{e}t smooth points in ${\mathcal L}(X,Y)$. We use a theorem of S. Heinrich \cite{H} that if $T \in {\mathcal K}(X,Y)$ attains its norm at a unique vector $x$ (modulo scalar multiplication) and $T(x_0)$ is a Frech\'{e}t smooth point, the unique functional $y^\ast \in \partial_e Y^\ast_1$ which attains its norm at $T(x_0)$ is a Frech\'{e}t smooth point of $Y^\ast$, then $T$ is a Frech\'{e}t smooth point of ${\mathcal K}(X,Y)$.
\vskip 1em
We also need the analysis of very smooth points of ${\mathcal K}(X,Y)$ (Proposition 2 from \cite{R}) and the description of $\partial_e{\mathcal K}(X,Y)^\ast_1 = \{x^{\ast\ast} \otimes y^\ast : x^{\ast\ast} \in \partial_e X^{\ast\ast}_1~,y^\ast \in \partial_e Y^\ast_1\}$. Where for any operator $S$, $(x^{\ast\ast}\otimes y^\ast)(S)= x^{\ast\ast}(S^\ast(y^\ast))$.
\begin{thm} Let $X$ be a Banach space such that in $X^\ast$ very smooth points are Frech\'{e}t differentiable and $Y$ a $L^1$-predual space. Suppose $T \in {\mathcal K}(X,Y)$ be a very smooth point. Then $T$ is a Frech\'{e}t smooth point in ${\mathcal L}(X,Y)$.
	
\end{thm}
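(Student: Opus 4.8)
The plan is to verify the hypotheses of Heinrich's theorem for $T$ and then to promote the resulting Fréchet smoothness from $\mathcal{K}(X,Y)$ to $\mathcal{L}(X,Y)$ by a duality argument of the kind used repeatedly above. First I would record what the very smoothness of $T$ supplies. Since $T$ is smooth there is a unique $\Phi_0 \in \partial_e \mathcal{K}(X,Y)^\ast_1$ with $\Phi_0(T)=\|T\|$, and by the description of the extreme points we may write $\Phi_0 = x_0^{\ast\ast}\otimes y_0^\ast$ with $x_0^{\ast\ast}\in\partial_e X^{\ast\ast}_1$ and $y_0^\ast\in\partial_e Y^\ast_1$. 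Using Proposition 2 of \cite{R} on very smooth points of $\mathcal{K}(X,Y)$, the fact that $T$ remains smooth in $\mathcal{K}(X,Y)^{\ast\ast}$ should force $T$ to attain its norm at a unique vector $x_0$ (modulo sign), make the range vector $T(x_0)$ a very smooth point of $Y$, and make the associated support data $x_0^\ast=\|T\|^{-1}T^\ast(y_0^\ast)$ a very smooth point of $X^\ast$.

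With this structure in hand I would feed the two standing hypotheses into Heinrich's criterion. Because $Y$ is an $L^1$-predual it has the property that $span\{y^\ast\}$ is an $L$-summand for every $y^\ast\in\partial_e Y^\ast_1$, so by the very-smooth-implies-Fréchet criterion established above every very smooth point of $Y$ — in particular $T(x_0)$ — is automatically Fréchet smooth; this supplies the range-side requirement. On the functional side, the assumption that very smooth points of $X^\ast$ are Fréchet smooth applies directly to $x_0^\ast$, supplying the domain-side requirement. Together with the uniqueness of the norm attainment at $x_0$, Heinrich's theorem \cite{H} then yields that $T$ is a Fréchet smooth point of $\mathcal{K}(X,Y)$.

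It remains to pass from $\mathcal{K}(X,Y)$ to the full operator space, and this is the step I expect to be the main obstacle. Here I would use that Fréchet differentiability at a point persists into all even duals (as recalled in the introduction), so $T$ is a point of Fréchet differentiability in $\mathcal{K}(X,Y)^{\ast\ast}$; then, realising $\mathcal{L}(X,Y)$ as a closed subspace of $\mathcal{K}(X,Y)^{\ast\ast}$ that carries $T$ to $T$, Lemma 2.1 of \cite{MR} forces $T$ to be Fréchet smooth in $\mathcal{L}(X,Y)$. The delicate point is justifying this isometric inclusion $\mathcal{L}(X,Y)\subseteq\mathcal{K}(X,Y)^{\ast\ast}$: it is exactly here that the hypothesis on $Y$ does the real work, since for an $L^1$-predual range space one expects $\mathcal{K}(X,Y)$ to be an $M$-ideal in $\mathcal{L}(X,Y)$ (cf. \cite{HWW}), which is precisely the condition making the canonical map $\mathcal{L}(X,Y)\to\mathcal{K}(X,Y)^{\ast\ast}$ an into-isometry. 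A secondary difficulty is extracting from Proposition 2 of \cite{R} the uniqueness of norm attainment and the very smoothness of $T(x_0)$ and $x_0^\ast$ in exactly the forms that Heinrich's theorem consumes.
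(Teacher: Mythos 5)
Your proposal is correct and follows essentially the same route as the paper: extract the unique norm-attaining data from Proposition 2 of \cite{R}, verify Heinrich's hypotheses using the $L^1$-predual structure of $Y$ (for $T(x_0)$) and the standing assumption on $X^\ast$ (for $T^\ast(y_0^\ast)$), and then pass to $\mathcal{L}(X,Y)$ via the canonical isometric chain $\mathcal{K}(X,Y)\subset\mathcal{L}(X,Y)\subset\mathcal{K}(X,Y)^{\ast\ast}$, which the paper justifies by the metric compact approximation property of the $L^1$-predual $Y$ (citing \cite{R2} and \cite{HWW}) rather than by naming the $M$-ideal property explicitly. The only cosmetic difference is that the paper invokes the persistence of Fr\'{e}chet smoothness into the bidual and restriction to subspaces exactly as you do, so there is no substantive gap.
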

\vskip 1em
\begin{proof} Since $T$ is a very smooth point, it follows from the arguments given during the proof of Proposition 2 in \cite{R},  that for a unique (up to scalar multiplication) $x_0 \in \partial_e X^\ast_1$ and $y^\ast_0 \in \partial_e Y^\ast_1$, such that $y^\ast(T(x_0))=\|T\|$. Consequently $\|T\|=\|T(x)\|$ and $T^\ast(y^\ast_0)(x_0)= \|T^\ast(y^\ast_0)\|$. Again as in the proof of Proposition 2 , $y^\ast_0$ is the unique functional (up to scalar multiples), attaining its norm at $T(x_0)$ and is also a point of weak$^\ast$-weak continuity for the identity map on $Y^\ast_1$. Therefore $T(x_0)$ is a very smooth point of $Y$. Hence by hypothesis, we get that $T(x_0)$ is a Frech\'{e}t smooth point of ${\mathcal K}(X,Y)$.  Also $\|T^\ast(y^\ast_0)\|= \|T^\ast\|=\|T\|$ and as $x_0$ is a point of weak$^\ast$-weak continuity in $X^{\ast\ast}_1$, we get $T^\ast(y^\ast_0)$ is a very smooth and hence Frech\'{e}t smooth point of $X^\ast$. It now follows from \cite{H}  that $T$ is a Frech\'{e}t smooth point of ${\mathcal K}(X,Y)$.
\vskip 1em
Since $Y$ is a $L^1$-predual space, $Y$ has the  metric (compact) approximation property (MCAP). Thus we have ${\mathcal K}(X,Y) \subset {\mathcal L}(X,Y) \subset {\mathcal K}(X,Y)^{\ast\ast}$ in the canonical embedding (see \cite{R2} Example 1, in conjugation with the remarks on page 334 of \cite{HWW}  for the case of MCAP) . Now as $T$ is a Frech\'{e}t smooth point of ${\mathcal K}(X,Y)^{\ast\ast}$, we get that $T$ is a Frech\'{e}t smooth point of ${\mathcal L}(X,Y)$.
\end{proof}
\vskip 1em
\begin{rem} For the $T$ as above, by applying Heinrich's Characterization of Frech\'{e}t smooth points in ${\mathcal L}(X,Y)$, we get that if $\{x_n\}_{n \geq 1} \subset X_1$ is such that $\|T(x_n)\| \rightarrow \|T\|$, then there exists a sequence of scalars $\lambda_n$ such that $\lambda_n x_x \rightarrow x_0$.
\end{rem}	
\begin{rem} We have assumed $Y$ to be a $L^1$-predual space, to ensure also the MAP of $Y$. The same arguments go through, if $X^\ast$ or $Y$ has the CMAP, $Y$ is such that for all $y^\ast \in \partial_e Y^\ast_1$ , $span\{y^\ast\}$ is a $L$-summand. It may be noted that if $Y$ is the disc algebra on the unit circle (space of continuous functions which have  extension, analytic in the interior), it has the MAP and hence the hypothesis of Theorem 11 is satisfied for $Y$. Same conclusions also hold when $J = \{y \in Y:y(E)=0\}$, where $E$ is a set of Lebesgue measure $0$ in the unit circle.  
\end{rem}

\end{document}